\newcolumntype{x}[1]{>{\centering\arraybackslash}p{#1}}
\begin{document}
	
	\title*{An overlapping waveform-relaxation preconditioner for economic optimal control problems with state constraints}
	\titlerunning{Overlapping WR preconditioner for economic control with state constaints}
	\author{Gabriele Ciaramella and Luca Mechelli}
	\authorrunning{G. Ciaramella and L. Mechelli}
	\institute{Gabriele Ciaramella \at Politecnico di Milano \email{gabriele.ciaramella@polimi.it}
		\and Luca Mechelli \at Universit\"at Konstanz \email{luca.mechelli@uni-konstanz.de}}
	%
	%
	\maketitle
	
	\abstract*{In this work, a class of parabolic economic optimal control problems is considered. 
		These problems are characterized by pointwise state constraints regularized by a parameter, 
		which transforms the pure state constraints in mixed control-state ones.
		However, the convergence of classical (semismooth) Newton methods deteriorates 
		for decreasing values of the regularization parameter. To tackle this problem, a nonlinear 
		preconditioner is introduced. This is based on an overlapping optimized waveform-relaxation method 
		characterized by Robin transmission conditions. Numerical experiments show that appropriate 
		choices of the overlap and of the Robin parameter lead to a preconditioned Newton method with a robust convergence 
		against the state constraints regularization parameter.}

	\section{Introduction} 
	This work is concerned with the numerical solution of so-called economic optimal control problems of the parabolic type. Let $\Omega=(-1,1)$, $T>0$ and $\mathcal{U}:= L^2(0,T;L^2(\Omega))$ endowed with its norm $\|\cdot\|_\mathcal{U}$. We want to solve
	\begin{subequations}
	\label{Mechelli_mini_13_P}
	\begin{equation}
	\label{Mechelli_mini_13_cost_functional}
	\min_{\mathcal{U}\times\mathcal{U}} \mathcal{J}(u,w) := \frac{1}{2}\|u\|^2_\mathcal{U} + \frac{1}{2}\|w\|^2_\mathcal{U},
	\end{equation}
	subject to the PDE-constraint
	\begin{equation}
	\label{Mechelli_mini_13_state_equation}
	\begin{aligned}
	y_t(t,x)-\Delta y(t,x) & = f(t,x)+u(t,x), && \text{in } (0,T)\times\Omega, \\
	y(t,-1) = y(t,1) & = 0,  && \text{in } (0,T), \\
	y(0,x) & = y_\circ(x),  && \text{in } \Omega,
	\end{aligned}
	\end{equation}
	with $y_\circ\in L^2(\Omega)$ and $f\in\mathcal{U}$, and to mixed control-state constraints
	\begin{equation}
	\label{Mechelli_mini_13_mixed_constraints}
	\left|u(t,x)\right| \leq c_u, \quad \left|y(t,x)+\varepsilon w(t,x)\right|\leq c_y(t), \quad \text{in } (0,T)\times\Omega,
	\end{equation}
	\end{subequations}
	where $c_u,\varepsilon>0$ and $c_y\in L^2(0,T)$ with $c_y(t)>0$ for $t\in(0,T)$. 
	Problem \eqref{Mechelli_mini_13_P} is related to the virtual control approach \cite{Mechelli_mini_13_Krumbiegel2009, Mechelli_mini_13_Mechelli2019, Mechelli_mini_13_Mechelli2018}, which is a regularization technique for pointwise state-constrained problems. Under further assumptions on $w$, in fact, one can show that, as $\varepsilon\to 0$, the solution to \eqref{Mechelli_mini_13_P} converges to the one of the same optimal control problem with \eqref{Mechelli_mini_13_mixed_constraints} replaced by $\left|u(t,x)\right| \leq c_u$ and $\left|y(t,x)\right|\leq c_y(t)$ in $(0,T)\times\Omega$; see, e.g., \cite{Mechelli_mini_13_Mechelli2019}. Note that there are no weights in front of the control norms in \eqref{Mechelli_mini_13_cost_functional}. This is because of the regularization parameter $\varepsilon$, which is also used to tune the magnitude of the controls $u$ and $w$. For example, the smaller is $\varepsilon$, the larger is $\|w\|_\mathcal{U}$. In contrast to classical optimal control problems, where the goal is to reach a precise target configuration, the focus of \eqref{Mechelli_mini_13_P} is to find minimum-energy feasible controls such that the state solution to \eqref{Mechelli_mini_13_state_equation} satisfies the bounds \eqref{Mechelli_mini_13_mixed_constraints}. This difference is particularly evident in the cost functional $\mathcal{J}$ in (1a), where only the norm squared of the controls are considered, instead of typical tracking-type terms. For these reasons, problems of the type (1) are called economic optimal control problems.
	A typical example is the optimal heating and cooling of residual buildings \cite{Mechelli_mini_13_Mechelli2019}. Note that, for any given $u \in \mathcal{U}$, the state equation \eqref{Mechelli_mini_13_state_equation} admits a unique (weak) solution $y=y(u)\in W(0,T):= \left\{\varphi\in L^2(0,T;H^1(\Omega))\big| \varphi_t \in L^2(0,T;H^{-1}(\Omega))\right\}$; see, e.g., \cite{Mechelli_mini_13_Troeltzsch2010,Mechelli_mini_13_Mechelli2018}. We assume that the admissible set $\mathcal{U}^\varepsilon_\mathsf{ad}$ has non-empty interior, where	$\mathcal{U}^\varepsilon_\mathsf{ad} := \left\{(u,w)\in \mathcal{U}\times \mathcal{U}\big| u \text{ and } y(u)+\varepsilon w \text{ satisfies \eqref{Mechelli_mini_13_mixed_constraints}} \right\}\subset \mathcal{U}\times\mathcal{U}$. This guarantees that \eqref{Mechelli_mini_13_P} admits a unique solution $(\bar u,\bar w)\in \mathcal{U}^\varepsilon_\mathsf{ad}$ \cite{Mechelli_mini_13_Troeltzsch2010}.
	The first-order necessary and sufficient optimality system \cite{Mechelli_mini_13_Mechelli2018,Mechelli_mini_13_Troeltzsch2010} of problem \eqref{Mechelli_mini_13_P} is
	\begin{equation}
	\label{Mechelli_mini_13_opt_syst}
	\begin{aligned}
	y_t(t,x)-\Delta y(t,x) &= \mathcal P(q(t,x))+f(t,x), && \text{in } (0,T)\times\Omega, \\
	y(t,-1) = y(t,1) &= 0,  && \text{in } (0,T),  \\
	y(0,x) & = y_\circ(x),  && \text{in } \Omega, \\
	q_t(t,x)+\Delta q(t,x) &= \mathcal{Q}^\varepsilon(y(t,x)), && \text{in } (0,T)\times\Omega, \\
	q(t,-1) = q(t,1)& = 0,  && \text{in } (0,T),  \\
	q(T,x) & = 0,  && \text{in } \Omega, \\
	\end{aligned}
	\end{equation}
     where $\mathcal{Q}^\varepsilon(y(t,x)):= \frac{1}{\varepsilon^2}(\max\{y(t,x)-c_y(t),0\}+\min\{y(t,x) + c_y(t),0\})$ and $\mathcal{P}(q(t,x)):= \max\{-c_u,\min\{c_u,q(t,x)\}\}$, for all $(t,x)\in(0,T)\times\Omega$, with $q$ the so-called adjoint variable. The pair $(\bar y,\bar q)$ is the solution to \eqref{Mechelli_mini_13_opt_syst} if and only if $(\bar u(t,x),\bar w(t,x)) = \left(\mathcal P(\bar q(t,x)),-\varepsilon\mathcal{Q}^\varepsilon(\bar y(t,x))\right)$, for $(t,x)\in(0,T)\times\Omega$, is the optimal solution to \eqref{Mechelli_mini_13_P}. System \eqref{Mechelli_mini_13_opt_syst} can be rewritten in the form
     \begin{equation}
     \label{Mechelli_mini_13_Newton_system}
     \mathcal{F}(y,q) = 0
     \end{equation}
     and thus solved by using a semismooth Newton method; see, e.g., \cite{Mechelli_mini_13_Mechelli2018,Mechelli_mini_13_Hintermueller2002}. 
     
     As shown in \cite{Mechelli_mini_13_Mechelli2019}, the semismooth Newton method lacks of convergence if the parameter $\varepsilon$ is not sufficiently large. This is, however, in contrast with typical applications, where a sufficiently small $\varepsilon$ is required \cite{Mechelli_mini_13_Mechelli2019,Mechelli_mini_13_Krumbiegel2009}. The goal of this paper is to tackle this problem by using a nonlinear preconditioning technique based on an overlapping optimized waveform-relaxation method (WRM) characterized by Robin transmission conditions \cite{Mechelli_mini_13_Dolean2016,Mechelli_mini_13_GanderHalpern1}. To the best of our knowledge, nonlinear preconditioning techniques have never been used for economic control problems. Therefore, this work aims to provide a first concrete study in order to show the applicability of WRM-based nonlinear preconditioners for this class of optimization problems. In particular, our goal is to assess the convergence behavior of the WRM nonlinear preconditioned Newton and its robustness against the regularization parameter $\varepsilon$. Our studies show that appropriate 
     choices of the overlap $L$ and of the Robin parameter $p$ lead to a preconditioned Newton method with a robust convergence with respect to $\varepsilon$. Let us also mention that for elliptic optimal control problems, it is possible to consider different transmission conditions; see, e.g., \cite{Mechelli_mini_13_Benamou1996,Mechelli_mini_13_Heink2006}.
     
      The paper is organized as follows. In Section~\ref{Mechelli_mini_13_sec2}, we introduce the WRM and present the algorithm for the proposed preconditioned generalized Newton. In Section~\ref{Mechelli_mini_13_sec3}, we report two numerical experiments that show the convergence behavior of the proposed computational framework in relation of the parameters characterizing problem \eqref{Mechelli_mini_13_P} and the optimized WRM.
     
     \section{The waveform-relation and the preconditioned generalized Newton methods}
     \label{Mechelli_mini_13_sec2}
     Let $\Omega$ be decomposed into two overlapping subdomains $\Omega_1= (-1,L)$ and $\Omega_2= (-L,1)$, where $2L\in(0,1)$ is the size of the overlap. Moreover, let $p>0$ and consider the operator $\mathcal{R}_j$ defined as $\mathcal{R}_j(y):= y_x+(-1)^{3-j}py$ for $j=1,2$. The WRM consists in iteratively solving, for $n\in\mathbb{N}$, $n\geq 1$, the system 
     \begin{equation}
     \label{Mechelli_mini_13_WRM-subproblems}
     \begin{aligned}
     y_t^{j,n}(t,x)-\Delta y^{j,n}(t,x) &= \mathcal{P}(q^{j,n}(t,x))+f(t,x), && \text{in } (0,T)\times\Omega_j, \\
     y^{j,n}(t,(-1)^j) &= 0,  && \text{in } (0,T),  \\
     \mathcal{R}_j(y^{j,n})(t,(-1)^{3-j}L) & = \mathcal{R}_j(y^{3-j,n-1})(t,(-1)^{3-j}L), &&  \text{in } (0,T), \\
     y^{j,n}(0,x) & = y_\circ(x),  && \text{in } \Omega_j, \\
     q^{j,n}_t(t,x)+\Delta q^{j,n}(t,x) &= \mathcal{Q}^\varepsilon(y^{j,n}(t,x)),  && \text{in } (0,T)\times\Omega_j, \\
     q^{j,n}(t,(-1)^j) & = 0,  && \text{in } (0,T),  \\
     \mathcal{R}_j(q^{j,n})(t,(-1)^{3-j}L) & = \mathcal{R}_j(q^{3-j,n-1})(t,(-1)^{3-j}L), && \text{in } (0,T), \\
     q^{j,n}(T,x) & = 0,  && \text{in } \Omega_j, \\
     \end{aligned}
     \end{equation}
     for $j=1,2$. We show first the well-posedness of the method.
     \begin{theorem}
     \label{Mechelli_mini_13_thm:wellpos}
     Let $g^1_y,g^2_y,g^1_q,g^2_q\in H^{1/4}(0,T)$ be initialization functions for the WRM, i.e., $\mathcal{R}_j(y^{j,1})(t,(-1)^{3-j}L)=g^j_y(t)$ and $\mathcal{R}_j(q^{j,1})(t,(-1)^{3-j}L)=g^j_q(t)$ for $t\in(0,T)$,
     with compatibility conditions $g^j_y(0) = \mathcal{R}_j(y_\circ)(t,(-1)^{3-j}L)$ and $g^j_q(0)=0$ for $j=1,2$. Then the WRM \eqref{Mechelli_mini_13_WRM-subproblems} is well-posed.
     \end{theorem}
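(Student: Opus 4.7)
The plan is to argue by induction on the iteration index $n$. The base case $n=1$ is covered directly by the hypothesis that $g^j_y, g^j_q \in H^{1/4}(0,T)$ satisfy the stated compatibility conditions at $t=0$. For the inductive step, note that at iteration $n$ the two subdomain problems in \eqref{Mechelli_mini_13_WRM-subproblems} fully decouple: the Robin data on the interface of $\Omega_j$ depends only on the traces of $y^{3-j,n-1}$ and $q^{3-j,n-1}$, which by induction lie in $H^{1/4}(0,T)$ with compatibility at $t=0$ for the state and $t=T$ for the adjoint. Thus it suffices to prove well-posedness of one subdomain problem for given Robin data in $H^{1/4}(0,T)$.

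On $\Omega_j$ the subproblem is a coupled forward-backward parabolic system with mixed Dirichlet-Robin boundary conditions (the outward-normal Robin coefficient is $+p>0$ on both subdomains, so the bilinear form is coercive) and Lipschitz nonlinearities $\mathcal{P}$ and $\mathcal{Q}^\varepsilon$. I would prove well-posedness by a Banach fixed-point argument: define
$$\Phi_j : L^2\bigl((0,T)\times\Omega_j\bigr) \to L^2\bigl((0,T)\times\Omega_j\bigr), \qquad \Phi_j(q) = q',$$
where $y$ is the unique $W(0,T;\Omega_j)$-solution of the forward Robin problem with source $\mathcal{P}(q)+f$ and the prescribed Robin datum, and $q'$ is the unique $W(0,T;\Omega_j)$-solution of the backward Robin problem with source $\mathcal{Q}^\varepsilon(y)$. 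Standard Lions-Magenes theory for parabolic problems with Robin boundary data in $H^{1/4}(0,T)$ yields both subproblems uniquely; see also the references to \cite{Mechelli_mini_13_Troeltzsch2010,Mechelli_mini_13_Mechelli2018}. Using that $\mathcal{P}$ is $1$-Lipschitz and $\mathcal{Q}^\varepsilon$ is $\varepsilon^{-2}$-Lipschitz, together with standard parabolic energy estimates, one shows that $\Phi_j$ is a contraction in the exponentially weighted norm $\|v\|_\lambda := \|e^{-\lambda t}v\|_{L^2}$ for $\lambda$ sufficiently large, yielding existence and uniqueness of $(y^{j,n},q^{j,n})$.

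To close the induction one must verify that the new Robin traces $\mathcal{R}_j(y^{j,n})$ and $\mathcal{R}_j(q^{j,n})$ evaluated at $x=(-1)^{3-j}L$ belong to $H^{1/4}(0,T)$ with the compatibility required to initialize iteration $n+1$; this is an anisotropic trace statement for $W(0,T;\Omega_j)$-solutions of Robin parabolic problems. The main obstacle I expect is precisely this trace-regularity step: proving that the full Robin combination (and not merely the Dirichlet trace) of the solution lies in $H^{1/4}(0,T)$, and checking the compatibility conditions at $t=0$ (and $t=T$ for $q$), requires a careful application of Lions-Magenes-type anisotropic trace theorems and reflects the fact that the $H^{1/4}$ scale is exactly the right one for the spatial normal-plus-value combination under the chosen temporal regularity. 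By contrast, the contraction step itself is standard once the Lipschitz bounds and Robin parabolic estimates are in place.
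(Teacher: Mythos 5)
Your overall plan (induction on $n$, reduce to one subdomain solve, then recover interface trace regularity) matches the paper's skeleton, but the core step --- well-posedness of a single subdomain problem --- is where your argument has a genuine gap. The subdomain system couples a \emph{forward} equation for $y^{j,n}$ with a \emph{backward} equation for $q^{j,n}$, and for such forward--backward systems the exponentially weighted contraction trick does not close. A weight $e^{-\lambda t}$ gives the forward solution operator a gain of order $1/\lambda$, but the backward solve runs against the weight: in reversed time the weight becomes increasing, and the backward solution operator acquires a norm of order $e^{\lambda T}/\lambda$ in the weighted space. No single choice of temporal weight makes both legs of the composition $q\mapsto y\mapsto q'$ small simultaneously, so the Lipschitz constant of $\Phi_j$ retains the factor $\varepsilon^{-2}$ (times constants depending on $T$) and your map is a contraction only for small $T$ or large $\varepsilon$ --- precisely the regime the paper is \emph{not} interested in, since the whole point is robustness as $\varepsilon\to 0$. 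This is the standard obstruction for forward--backward systems: arbitrary-horizon well-posedness needs the monotonicity/variational structure, not just Lipschitz bounds.

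The paper's proof exploits exactly that structure, and this is the idea your proposal misses: the subdomain system is identified as the first-order optimality system of an auxiliary \emph{convex} optimal control problem on $\Omega_j$, namely minimizing $\tfrac12\|u^j\|^2_{\mathcal{U}_j}+\tfrac12\|w^j\|^2_{\mathcal{U}_j}+\int_0^T g^j_q(t)\,y^j(t,(-1)^{3-j}L)\,\mathrm{d}t$ subject to the forward Robin problem and the mixed constraints; the extra linear boundary term in the cost is what produces the prescribed Robin datum $g^j_q$ for the adjoint. Strict convexity and a nonempty admissible set give a unique minimizer, hence unique solvability of the optimality system $=$ WRM subproblem in $H^{2,1}_j\times H^{2,1}_j$, with no smallness condition on $T$ or $\varepsilon$. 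Your closing observation about the $H^{1/4}$ interface trace regularity is a fair one --- the paper itself is terse here, only asserting $L^2(0,T)$ regularity of $y^{j,1}$, $y^{j,1}_x$, $q^{j,1}$, $q^{j,1}_x$ at the interface from the $H^{2,1}_j$ regularity of the solution --- but that step cannot rescue the contraction argument, which fails before one ever reaches it.
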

 	\begin{proof}
 	For $j=1,2$, we define $H^{2,1}_j:= L^2(0,T;H^2(\Omega_j))\times H^1(0,T;L^2(\Omega_j))$ and $\mathcal{U}_j= L^2(0,T;L^2(\Omega_j))$. Consider the auxiliary problems
 	\[
 	\min_{\mathcal{U}_j\times\mathcal{U}_j} \mathcal{J}_\text{aux}(u^j,w^j)= \frac{1}{2}\|u^j\|_{\mathcal{U}_j}^2+\frac{1}{2}\|w^j\|_{\mathcal{U}_j}^2+\int_0^T g^j_q(t)y^{j}(t,(-1)^{3-j}L)\mathrm{d} t
 	\]
 	subject to 
 	\[
 	\begin{aligned}
 	y^j_t(t,x)-\Delta y^j(t,x) & = u^j(t,x)+f(t,x), && \text{in } (0,T)\times\Omega_j, \\
 	y^j(t,(-1)^j) & = 0,  && \text{in } (0,T), \\
 	\mathcal{R}_j(y^{j})(t,(-1)^{3-j}L) & = g^j_y(t)&& \text{in } (0,T), \\
 	y^j(0,x) & = y_\circ(x),  && \text{in } \Omega_j, \\
 	\left|u^j(t,x)\right| \leq c_u, \quad \left|y^j(t,x)\right.&\left.+\varepsilon w^j(t,x)\right|\leq c_y(t), && \text{in } (0,T)\times\Omega_j,
 	  %
 	\end{aligned}
 	\]
 	for given $g^j_y,g^j_q\in H^{1/4}(0,T)$. These auxiliary optimal control problems admit a unique optimal solution $(\bar u^j,\bar w^j)\in\mathcal{U}_j\times\mathcal{U}_j$ for $j=1,2$. Furthermore, the optimality system corresponding to each problem has the form of \eqref{Mechelli_mini_13_WRM-subproblems} and it is uniquely solvable by $(\bar y^j,\bar q^j)\in H^{2,1}_j\times H^{2,1}_j$ such that
   \[
   (\bar u^j(t,x),\bar w^j(t,x)) = (\mathcal P(\bar q^j(t,x)),-\varepsilon\mathcal{Q}^\varepsilon(\bar y^j(t,x))), \quad \text{in } (0,T)\times\Omega_j.
   \]
   For more details see \cite{Mechelli_mini_13_Troeltzsch2010,Mechelli_mini_13_LionsMagenes1,Mechelli_mini_13_GanderHalpern1}.
   This proves well-posedness of the WRM for $n=1$ and $j=1,2$. By iteratively applying the previous arguments is then easy to show that the WRM is well-posed for $n>1$, because $y^{j,1}((-1)^jL),y_x^{j,1}((-1)^jL),q^{j,1}((-1)^jL),q_x^{j,1}((-1)^jL)\in L^2(0,T)$.
 	\end{proof}
 Theorem~\ref{Mechelli_mini_13_thm:wellpos} implies that \eqref{Mechelli_mini_13_WRM-subproblems} admits a unique solution $(y^{j,n},p^{j,n})\in H^{2,1}_j\times H^{2,1}_j $ for $j=1,2$ and $n\geq1$. Note that, at each iteration of the WRM, the solution at iteration $n$ depends on the one at iteration $n-1$. Therefore, we can define the solution mappings $\mathcal S_j: H^{2,1}_{3-j}\times H^{2,1}_{3-j} \to H^{2,1}_j\times H^{2,1}_{j}$ for $j=1,2$ as
 \begin{equation}
 \label{Mechelli_mini_13_solutionmaps}
 \begin{aligned}
 (y^1,q^1) = \mathcal S_1(y^2,q^2) \text{ solves \eqref{Mechelli_mini_13_WRM-subproblems} for } j=1 \text{, } y^{2,n-1}=y^2 \text{ and } q^{2,n-1}=q^2, \\
 (y^2,q^2) = \mathcal S_2(y^1,q^1) \text{ solves \eqref{Mechelli_mini_13_WRM-subproblems} for } j=2 \text{, } y^{1,n-1}=y^1 \text{ and } q^{1,n-1}=q^1,
 \end{aligned}
 \end{equation}
 and the preconditioned form of \eqref{Mechelli_mini_13_Newton_system} as
 \begin{equation}
 \label{Mechelli_mini_13_precond_semismooth_newton}
 \mathcal{F}_P(y^1,q^1,y^2,q^2) = (\mathcal{F}_1(y^1,q^1,y^2,q^2),\mathcal{F}_2(y^1,q^1,y^2,q^2)) = 0,
 \end{equation}  
 where $\mathcal{F}_j(y^1,q^1,y^2,q^2)= (y^j,q^j)-\mathcal{S}_j(y^{3-j},q^{3-j})$, for $j=1,2$.
 To solve \eqref{Mechelli_mini_13_precond_semismooth_newton}, we apply a generalized Newton method. To do so, we assume that the maps $\mathcal{S}_j$, $j=1,2$, admit derivative\footnote{Since the functions $\mathcal{S}_j$ are implicit functions of semismooth functions, one cannot directly invoke the implicit function theorem to obtain the desired regularity. Hence, investigating the existence and regularity of $D\mathcal{S}_j$ requires a detailed theoretical analysis, which is beyond the scope of this short manuscript.} $D\mathcal{S}_j$.
 This allows us to characterize the derivative $D\mathcal{F}_P$ and its application to a direction $\mathbf{d}^{3-j}=(d^{3-j}_y,d^{3-j}_q)\in H^{2,1}_{3-j}\times H^{2,1}_{3-j}$, which is needed for the generalized Newton method. Let $z^j:= (y^j,q^j)\in H^{2,1}_j\times H^{2,1}_j$ for $j=1,2$. Thus, we have that $z^j= \mathcal{S}_j(z^{3-j})$, according to the definition of the mapping $\mathcal{S}_j$ in \eqref{Mechelli_mini_13_solutionmaps}. Moreover, we have that $\mathcal F_j(\mathcal{S}_j(z^{3-j}),z^{3-j})=0$. From this we formally obtain
\[
D_1 \mathcal F_j(\mathcal{S}_j(z^{3-j}),z^{3-j})D\mathcal{S}_j(z^{3-j})(\mathbf{d}^{3-j})+D_2\mathcal F_j(\mathcal{S}_j(z^{3-j}),z^{3-j})(\mathbf{d}^{3-j}) = 0,
\]
which leads to $D\mathcal{S}_j(y^{3-j},q^{3-j})(\mathbf{d}^{3-j})=(\widetilde{y}^j,\widetilde{q}^j)$ where $ (\widetilde{y}^j,\widetilde{q}^j)$ solves
\begin{equation}
\label{Mechelli_mini_13_linearized_WRM-subproblems}
\begin{aligned}
\widetilde y_t^{j}(t,x)-\Delta \widetilde y^{j}(t,x) &= \widetilde q^{j}(t,x)\chi_{\mathcal{I}(q^j)}(t,x), && \text{in } (0,T)\times\Omega_j, \\
\widetilde y^{j}(t,(-1)^j) &= 0,  && \text{in } (0,T),  \\
\mathcal{R}_j(\widetilde y^{j})(t,(-1)^{3-j}L) & =  \mathcal{R}_j(d^{3-j}_y) (t,(-1)^{3-j}L), &&  \text{in } (0,T), \\
\widetilde y^{j}(0,x) & = 0,  && \text{in } \Omega_j, \\
\widetilde q^{j}_t(t,x)+\Delta \widetilde q^{j}(t,x) &= \frac{\widetilde y^{j}(t,x)}{\varepsilon^2}\chi_{\mathcal{A}(y^j)}(t,x),  && \text{in } (0,T)\times\Omega_j, \\
\widetilde q^{j}(t,(-1)^j) & = 0,  && \text{in } (0,T),  \\
\mathcal{R}_j(\widetilde q^{j})(t,(-1)^{3-j}L) & = \mathcal{R}_j(d^{3-j}_q)(t,(-1)^{3-j}L), && \text{in } (0,T), \\
\widetilde q^{j,n}(T,x) & = 0,  && \text{in } \Omega_j. \\
\end{aligned}
\end{equation}
for $j=1,2$, with $\chi_{\mathcal{I}(q^j)}$ and $\chi_{\mathcal{A}(y^j)}$ the characteristic functions of the sets
\[
\begin{aligned}
\mathcal{I}(q^j) & := \{(t,x)\in(0,T)\times\Omega_j\big| \left|q^j(t,x)\right|\leq c_u \}, \\
\mathcal{A}(y^j)&:= \{(t,x)\in(0,T)\times\Omega_j\big| \left|y^j(t,x)\right| > c_y(t)\}.
\end{aligned}
\]
Note that \eqref{Mechelli_mini_13_linearized_WRM-subproblems} is a linearization of the WRM subproblems \eqref{Mechelli_mini_13_WRM-subproblems}. Now, we can resume our preconditioned generalized Newton method in Algorithm~\ref{Mechelli_mini_13_Alg1}.
\begin{algorithm}
	\begin{algorithmic}[1]
		\STATE \textbf{Data:} Initial guess $y^{j,0}$ and $q^{j,0}$ for $j=1,2$, tolerance $\tau$.
		\STATE Perform one WRM step to compute $\mathcal{S}_j(y^{3-j,0},q^{3-j,0})$;
		\STATE Assemble $\mathcal{F}_P(y^{1,0},q^{1,0},y^{2,0},q^{2,0})$ and set $k=0$;
		\WHILE {$\|\mathcal{F}_P(y^{1,k},q^{1,k},y^{2,k},q^{2,k})\|\geq \tau$} {
			\STATE Compute $\mathbf{d}^1,\mathbf{d}^2$ solving $D\mathcal F_P(y^1,q^1,y^2,q^2)(\mathbf{d}^1,\mathbf{d}^2) = -\mathcal F_P(y^1,q^1,y^2,q^2)$ by using a matrix-free Krylov method, e.g., GMRES, and considering that $D\mathcal F_P(y^1,q^1,y^2,q^2)(\mathbf{d}^1,\mathbf{d}^2)= (\mathbf{d}^1-(\widetilde{y}^1,\widetilde{q}^1),\mathbf{d}^2-(\widetilde{y}^2,\widetilde{q}^2))$, with $(\widetilde{y}^j,\widetilde{q}^j)$ solution to the linearized subproblems \eqref{Mechelli_mini_13_linearized_WRM-subproblems} for $j=1,2$;
			\STATE Update $(y^{j,k+1},q^{j,k+1})=(y^{j,k},q^{j,k})+\mathbf{d}^j$ and set $k=k+1$;
			\STATE Perform one WRM step to compute $\mathcal{S}_j(y^{3-j,k},q^{3-j,k})$;
			\STATE Assemble $\mathcal{F}_P(y^{1,k},q^{1,k},y^{2,k},q^{2,k})$;
		}
		\ENDWHILE
	\end{algorithmic}
	\caption{WRM-preconditioned generalized Newton method \label{Mechelli_mini_13_Alg1}}
\end{algorithm}
	\vspace{-3.5em}
\section{Numerical experiments}\label{Mechelli_mini_13_sec3}
\begin{figure}[t]
	\centering 
	\includegraphics[height=30mm]{./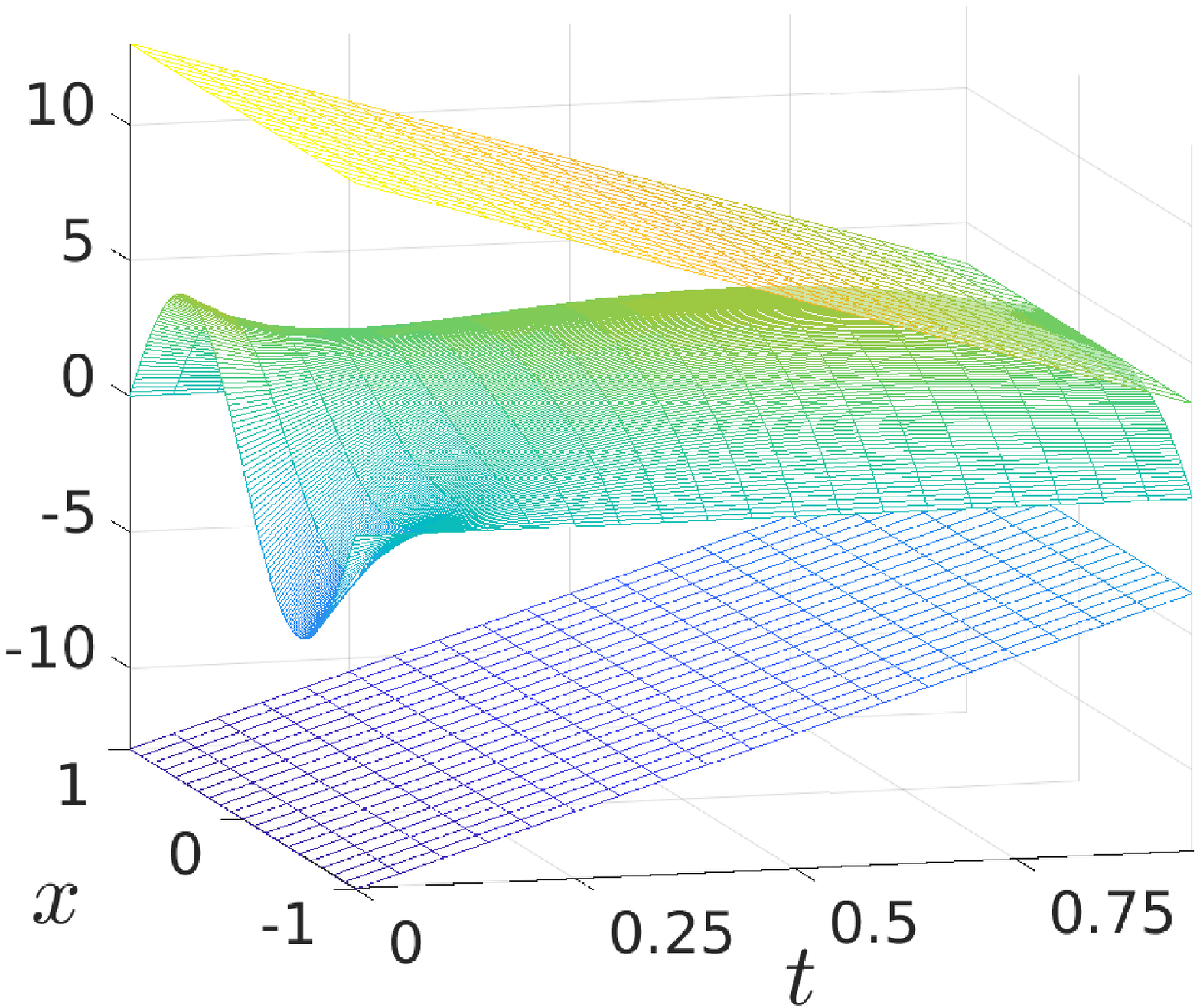} \hspace{1em}
	\includegraphics[height=30mm]{./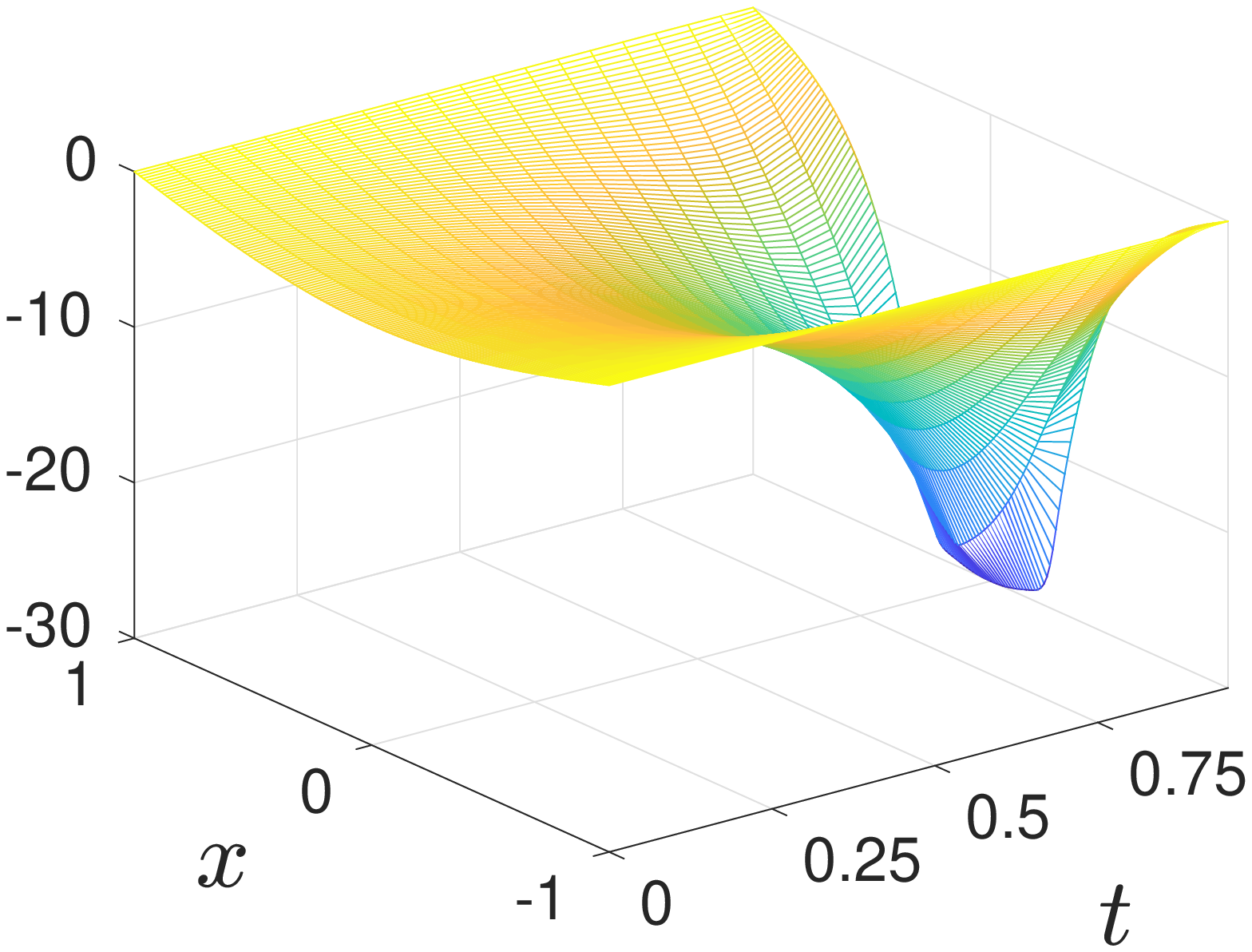}
	\caption{Test1: Optimal state with bound $c_y$ (left) and control (right) for $\varepsilon=5\times 10^{-4}$.} \label{Mechelli_mini_13_Fig1}
\end{figure}
\begin{table}[t]
	\centering
	\begin{tabular}{|c|c|c|c|c|c|c|c|}
		\hline
		$L$ & \diagbox{\hspace{1em}$p$}{$\varepsilon$\hspace{1em}} & $10^{-1}$ &$5\times 10^{-2}$ & $10^{-2}$ & $5\times 10^{-3}$ &$10^{-3}$ & $5\times 10^{-4}$ \\ \hline \multicolumn{8}{c}{}\\[-0.5em]\hline 
		$\Delta x$ & $10^{-6}$              &  $4 (5$--$2)$ & $4(6$--$2)$ & $5 (12$--$2)$& $6(13$--$2)$  &$7(35$--$2)$ & $8(45$--$2)$\\ \hline
		$\Delta x$ &  $10^{-4}$              &  $4 (5$--$2)$ &$4(6$--$2)$ & $5 (13$--$2)$& $6(13$--$2)$& $7(34$--$2)$& $8(45$--$2)$\\ \hline
		$\Delta x$ &  $10^{-2}$            &   $4 (6$--$2)$&$4(6$--$2)$ & $ 5 (11$--$2)$  & $ 6 (13$--$2)$& $7(30$--$2)$ & $8(43$--$2)$\\ \hline
		$\Delta x$ & $10^{0}$             &   $5 (4$--$2)$& $5(5$--$2)$ & $ 5 (9$--$2)$& $ 6 (12$--$2)$    & $200(112$--$2)$ & $200(123$--$3)$\\ \hline
		$\Delta x$ &  $10^{2}$              &  $6 (4$--$2)$&$6(5$--$2)$ &$ 8 (8$--$2)$& $ 9 (9$--$2)$	& $6(22$--$2)$& $9(37$--$2)$\\ \hline
		$\Delta x$ & $10^{4}$             &    $6 (5$--$2)$&$6(5$--$2)$ &$ 9 (7$--$2)$& $ 9 (10$--$2)$	& $8(23$--$2)$& $200(65$--$4)$\\ \hline
		$\Delta x$ &  $10^{6}$              & $6 (5$--$2)$ &$6(5$--$2)$ & $ 9 (7$--$2)$& $ 9 (10$--$2)$	& $200(33$--$2)$& $200(92$--$3)$\\ \hline
		\multicolumn{8}{c}{}\\[-0.5em] \hline 
		$2\Delta x$ & $10^{-6}$              &  $4 (5$--$2)$ & $4(7$--$2)$ & $5 (11$--$2)$& $6(13$--$2)$  &$7(39$--$2)$ & $6(51$--$2)$\\ \hline
		$2\Delta x$ & $10^{-4}$              &  $4 (5$--$2)$ &$4(7$--$2)$ & $5 (11$--$2)$& $6(13$--$2)$& $7(41$--$2)$& $6(48$--$2)$\\ \hline
		$2\Delta x$ & $10^{-2}$            &   $4 (6$--$2)$&$4(7$--$2)$ & $ 5 (12$--$2)$  & $ 5 (13$--$2)$& $7(23$--$2)$ & $6(54$--$2)$\\ \hline
		$2\Delta x$ & $10^{0}$             &   $5 (4$--$2)$& $5(6$--$2)$ & $ 5 (9$--$2)$& $ 6 (11$--$2)$    & $7(27$--$2)$ & $200(107$--$3)$\\ \hline
		$2\Delta x$ & $10^{2}$              &  $6 (4$--$2)$&$6(5$--$2)$ &$ 8 (8$--$2)$& $ 8 (10$--$2)$	& $8(26$--$2)$& $9(37$--$2)$\\ \hline
		$2\Delta x$ & $10^{4}$             &    $6 (5$--$2)$&$6(5$--$2)$ &$ 8 (8$--$2)$& $ 9 (10$--$2)$	& $8(19$--$2)$& $9(41$--$2)$\\ \hline
		$2\Delta x$ & $10^{6}$              & $6 (5$--$2)$ &$6(5$--$2)$ & $ 8 (8$--$2)$& $ 9 (9$--$2)$	& $8(19$--$2)$& $9(41$--$2)$\\ \hline
		\multicolumn{8}{c}{}\\[-0.5em] \hline 
		$4\Delta x$ & $10^{-6}$              & $4(5$--$2)$ & $4(7$--$2)$ & $5(11$--$2)$ & $6(13$--$2)$ & $6(30$--$2)$ &  $200(126$--$6)$\\ \hline
		
		$4\Delta x$ &  $10^{-4}$              & $4(5$--$2)$ & $4(7$--$2)$ & $5(11$--$2)$ & $6(13$--$2)$ & $6(30$--$2)$  &  $200(98$--$4)$ \\ \hline
		
		$4\Delta x$ &  $10^{-2}$            & $4(5$--$2)$ & $4(7$--$2)$ & $5(12$--$2)$ & $6(13$--$2)$ & $6(30$--$2)$  &  $11(124$--$2)$  \\ \hline
		
		$4\Delta x$ &  $10^{0}$             & $4(5$--$2)$ & $4(6$--$2)$& $5(9$--$2)$ & $6(11$--$2)$ & $6(27$--$2)$  & $200(152-5)$  \\ \hline
		
		$4\Delta x$ &  $10^{2}$              & $6 (4$--$2)$ & $6(5$--$2)$ & $8(8$--$2)$ & $8(10$--$2)$ & $10(23$--$2)$ & $15(40-2)$\\ \hline
		
		$4\Delta x$ &  $10^{4}$             & $6 (4$--$2)$ & $6(5$--$2)$ & $8(8$--$2)$ & $8(10$--$2)$& $9(26$--$2)$ & $200(183$--$3)$\\ \hline
		
		$4\Delta x$ &  $10^{6}$              & $6 (4$--$2)$ & $6(5$--$2)$ & $8(8$--$2)$ &$8(10$--$2)$ & $9(26$--$2)$ & $200(45$--$2)$\\ \hline
		\multicolumn{8}{c}{}\\[-0.5em] \hline 
			\multicolumn{2}{|c|}{Sem. New.} 		& $4   $            &   $5$   & $10$ &    $13$	& $30$		& $44$ \\ \hline 
		
	\end{tabular}
	\caption{Test1: Number of outer iterations (maximum number - minimum number of inner iterations) for preconditioned generalized Newton varying $L$, $p$ and $\varepsilon$ and number of iterations for the semismooth Newton applied to \eqref{Mechelli_mini_13_Newton_system} (last row).}  \label{Mechelli_mini_13_Tabtest1}
	\vspace{-2em}
\end{table}
In this section, we study the behavior of the preconditioned generalized Newton method (Algorithm~\ref{Mechelli_mini_13_Alg1}) and its robustness against the Robin parameter $p$, the regularization $\varepsilon$ and the overlap $L$. It is well known that the convergence of the semismooth Newton method applied to \eqref{Mechelli_mini_13_Newton_system} deteriorates fast for decreasing values of $\varepsilon$, since the solution approaches the one of a pure pointwise state-constrained problem, whose adjoint variable $q$ lacks of $L^2$-regularity; cf. \cite{Mechelli_mini_13_Troeltzsch2010,Mechelli_mini_13_Mechelli2019}.
\begin{figure}[t]
	\centering 
	\includegraphics[height=30mm]{./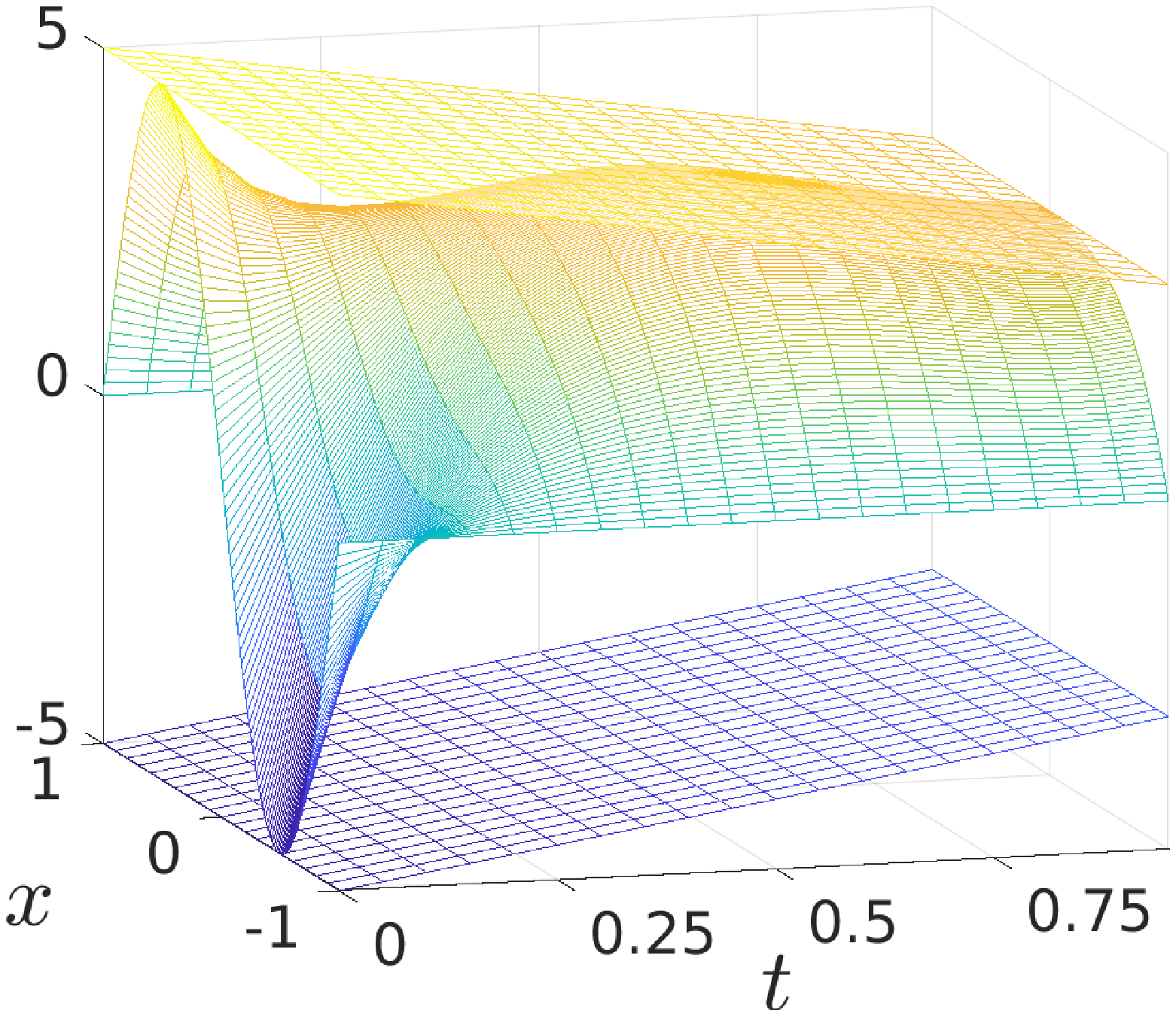} \hspace{1em}
	\includegraphics[height=30mm]{./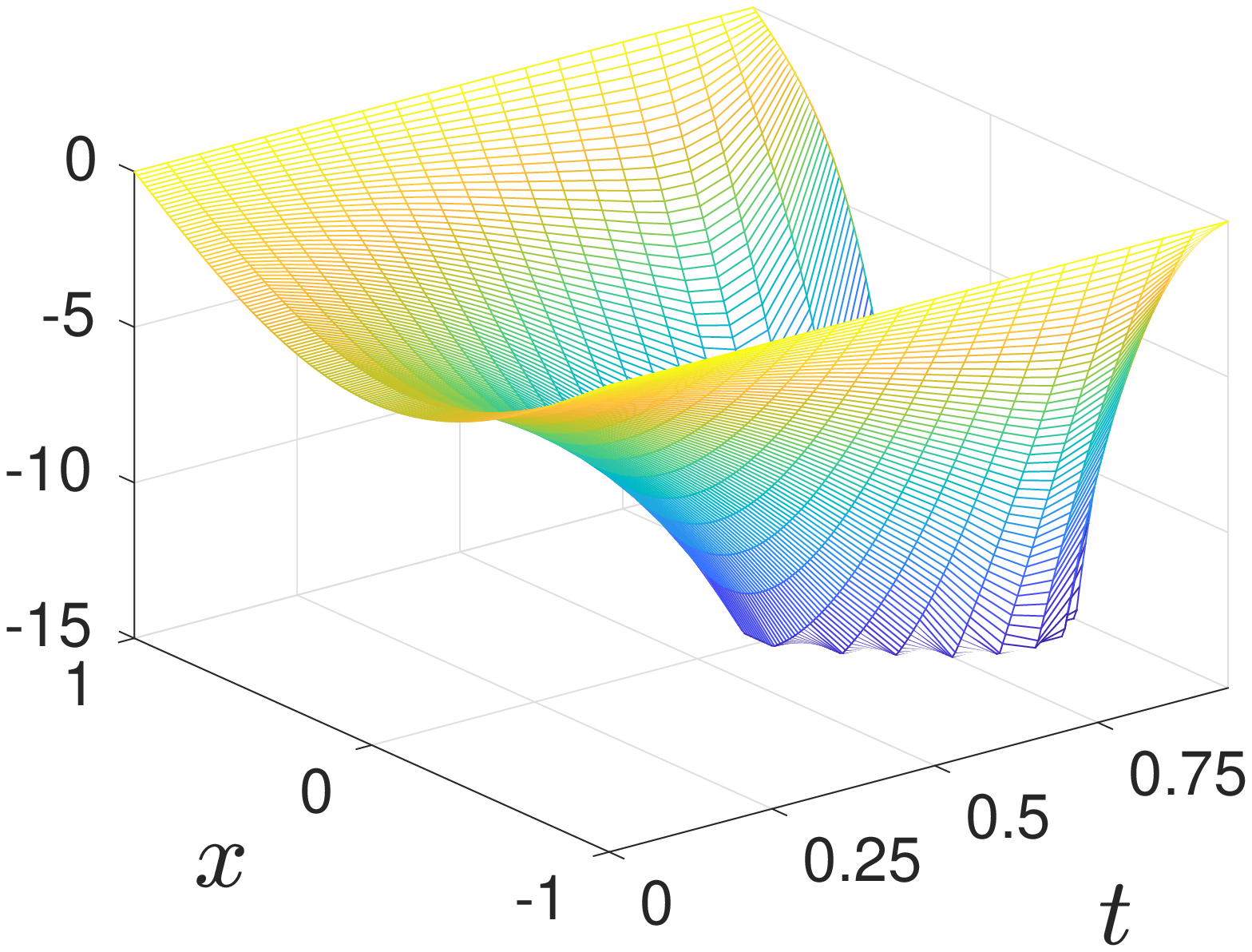}
	\caption{Test2: Optimal state with bound $c_y$ (left) and control (right) for $\varepsilon=5\times 10^{-4}$.} \label{Mechelli_mini_13_Fig2}
\end{figure}
\vspace{-0.5em}
\begin{table}[t]
	\centering
	\begin{tabular}{|c|c|c|c|c|c|c|c|}
		\hline
		$L$ & \diagbox{\hspace{1em}$p$}{$\varepsilon$\hspace{1em}} & $10^{-1}$ &$5\times 10^{-2}$ & $10^{-2}$ & $5\times 10^{-3}$ &$10^{-3}$ & $5\times 10^{-4}$ \\ \hline \multicolumn{8}{c}{}\\[-0.5em] \hline 
		$\Delta x$ &$10^{-6}$              &  $5 (5$--$2)$ & $6(7$--$2)$ & $10 (10$--$2)$& $200(61$--$2)$  & $200(102$--$2)$&  $200(297$--$4)$  \\ \hline
		$\Delta x$ &$10^{-4}$              &  $5 (5$--$2)$ &$6(7$--$2)$ & $10 (10$--$2)$& $200(32$--$2)$& $200(246$--$2)$   &  $200(145$--$2)$\\ \hline
		$\Delta x$ &$10^{-2}$            &   $5 (5$--$2)$&$6(7$--$2)$ & $ 8 (10$--$2)$  & $ 200 (25$--$2)$& $200(500$--$2)$ &  $200(500$--$4)$ \\ \hline
		$\Delta x$ &$10^{0}$             &   $5 (5$--$2)$& $6(6$--$2)$ & $ 6 (10$--$2)$& $ 9 (11$--$2)$    & $200(122$--$4)$ &  $200(193$--$2)$ \\ \hline
		$\Delta x$ &$10^{2}$              &  $6 (4$--$2)$&$7(5$--$2)$ &$ 9 (8$--$2)$& $ 9 (10$--$2)$	& $9(20$--$2)$     &    $10(25$--$2)$\\ \hline
		$\Delta x$ &$10^{4}$             &    $6 (4$--$2)$&$7(5$--$2)$ &$ 9 (8$--$2)$& $ 9 (11$--$2)$	& $11(20$--$2)$   &    $200(32$--$2)$\\ \hline
		$\Delta x$ &$10^{6}$              & $6 (4$--$2)$ &$7(5$--$2)$ & $ 9 (8$--$2)$& $ 9 (11$--$2)$	& $11(20$--$2)$  &     $200(67$--$4)$\\ \hline
		\multicolumn{8}{c}{}\\[-0.5em] \hline
		$2\Delta x$ & $10^{-6}$              &  $5 (6$--$2)$ & $6(7$--$2)$ & $12(11$--$2)$& $200(29$--$2)$  & $200(123$--$2)$ &  $200(206$--$3)$\\ \hline
		$2\Delta x$ &$10^{-4}$              &  $5 (6$--$2)$ &$6(7$--$2)$ & $12(11$--$2)$& $200(28$--$2)$& $200(91$--$2)$ &  $200(196$--$3)$\\ \hline
		$2\Delta x$ &$10^{-2}$            &   $5 (6$--$2)$&$6(7$--$2)$ & $11(11$--$2)$  & $200(25$--$2)$& $200(500$--$4)$ & $200(500$--$4)$ \\ \hline
		$2\Delta x$ &$10^{0}$             &   $5 (5$--$2)$& $6(6$--$2)$ & $ 6 (9$--$2)$& $ 7 (10$--$2)$    & $200(166$--$5)$ & $200(183$--$2)$\\ \hline
		$2\Delta x$ &$10^{2}$              &  $6 (4$--$2)$&$7(5$--$2)$ &$ 8 (8$--$2)$& $ 9 (11$--$2)$	& $9(20$--$2)$& $10(29$--$2)$\\ \hline
		$2\Delta x$ &$10^{4}$             &    $6 (4$--$2)$&$7(5$--$2)$ &$ 9 (7$--$2)$& $ 9 (11$--$2)$	& $10(20$--$2)$& $9(26$--$2)$\\ \hline
		$2\Delta x$ &$10^{6}$              & $6 (4$--$2)$ &$7(5$--$2)$ & $ 9 (7$--$2)$& $ 9 (11$--$2)$	& $10(19$--$2)$& $10(26$--$2)$\\ \hline
		\multicolumn{8}{c}{}\\[-0.5em] \hline
		$4\Delta x$ &$10^{-6}$              & $5(5$--$2)$ & $6(7$--$2)$ & $10(11$--$2)$  & $200(32$--$2)$ & $200(313$--$4)$ &  $200(187$--$4)$\\ \hline
		
		$4\Delta x$ &$10^{-4}$              & $5(5$--$2)$ & $6(7$--$2)$ & $10(11$--$2)$  & $200(27$--$2)$ & $200(145$--$4)$ &  $200(148$--$4)$\\ \hline
		
		$4\Delta x$ &$10^{-2}$              & $6(5$--$2)$ & $6(7$--$2)$ & $9(11$--$2)$  & $200(35$--$3)$ & $200(296-4)$ & $200(500-4)$ \\ \hline
		
		$4\Delta x$ &$10^{0}$               & $5(5$--$2)$ & $5(6$--$2)$& $6(8$--$2)$ & $8(11$--$2)$ & $200(136-3)$ & $200(500-3)$ \\ \hline
		
		$4\Delta x$ &$10^{2}$               & $6(4$--$2)$ & $7(5$--$2)$ & $6(8$--$2)$  & $8(11$--$2)$ & $11(20$--$2)$ & $14(44$--$2)$\\ \hline
		
		$4\Delta x$ &$10^{4}$               & $6(4$--$2)$ & $7(5$--$2)$ & $8(8$--$2)$ & $8(11$--$2)$& $10(20$--$2)$ & $12(26$--$2)$\\ \hline
		
		$4\Delta x$ &$10^{6}$               & $6(4$--$2)$ & $7(5$--$2)$ & $8(8$--$2)$ &$8(11$--$2)$ & $10(20$--$2)$ & $13(25$--$2)$\\ \hline
		\multicolumn{8}{c}{}\\[-0.5em] \hline
		\multicolumn{2}{|c|}{Sem. New.} 	& $4   $            &   $6$   & $10$ &    $12$	& $23$		& $30$ \\ \hline 
	\end{tabular}
	\caption{Test2: Number of outer iterations (maximum number - minimum number of inner iterations) for preconditioned generalized Newton varying $L$, $p$ and $\varepsilon$ and number of iterations for the semismooth Newton applied to \eqref{Mechelli_mini_13_Newton_system} (last row).}  \label{Mechelli_mini_13_Tabtest2} 
	\vspace{-2em}
\end{table}
The focus is on understanding if the WRM can be a valid (nonlinear) preconditioner and in which cases. We will perform two numerical experiments. In both tests we discretize the domain $\Omega$ with $n_x=161$ points and we apply a centered finite-difference scheme. Furthermore, we consider $n_t=21$ time discretization points and apply the implicit Euler method.  The initial guesses $y^{j,0}$ and $q^{j,0}$ are chosen randomly but feasible, i.e. such that $\left(\mathcal P(q^{j,0}(t,x)),-\varepsilon\mathcal{Q}^\varepsilon(y^{j,0}(t,x))\right)\in\mathcal{U}^\varepsilon_\mathsf{ad}$, since we noticed that choosing feasible initial guesses improves the convergence of the method. For the first test we choose $T=1$, $y_\circ(x)= 5\sin(\pi x)$, $f(t,x) = 20$, $c_u = 30$ and $c_y(t) = 10(1-t)+3$ for all $(t,x)\in(0,1)\times\Omega$. As one can see from Table~\ref{Mechelli_mini_13_Tabtest1}, for a decreasing $\varepsilon$ the number of iterations of the semismooth Newton method applied to \eqref{Mechelli_mini_13_Newton_system} increases and its convergence deteriorates fast. On the contrary, the number of iterations of Algorithm~\ref{Mechelli_mini_13_Alg1} is almost constant as $\varepsilon$ varies (when it converges). Choosing $p=10^2$ guarantees that the method is convergent for any choice of $\varepsilon$ and $L$. In particular, for small $\varepsilon$, such as $10^{-3}$ and $5\times 10^{-4}$, the speed-up in terms of number of iterations is also significant. According to Table~\ref{Mechelli_mini_13_Tabtest1}, there are some combinations for which Algorithm~\ref{Mechelli_mini_13_Alg1} reaches a maximum number of iterations. This issue can be related to the fact that $y^{j,k}$ and $q^{j,k}$ might become unfeasible during Algorithm~\ref{Mechelli_mini_13_Alg1} and when traced to the interface of the other subdomain might cause oscillations. A more detailed study on this convergence issue and on possible solutions is beyond the scope of this short manuscript and will be investigated in a future work. For the second test we choose $T=1$, $y_\circ(x)= 5\sin(\pi x)$, $f(t,x) = 18$, $c_u = 15$ and $c_y(t)= 2(1-t)+3$ for $(t,x)\in(0,1)\times\Omega$. In this case, there are more points in the space-time domain for which both bounds become active (cf. Figures~\ref{Mechelli_mini_13_Fig1}-\ref{Mechelli_mini_13_Fig2}). This makes the problem even more difficult to be solved by the WRM, since its nonlinearities are more strongly activated. In Table~\ref{Mechelli_mini_13_Tabtest2}, in fact, the number of cases for which Algorithm~\ref{Mechelli_mini_13_Alg1} does not converge increases with respect to the first numerical experiment, particularly for $\varepsilon$ small. Increasing the size of the overlap helps when $p$ is large enough, i.e. when the Dirichlet part of the transmission conditions of the WRM dominates the Neumann part. Transmission conditions of Dirichlet type and large-enough overlap guarantee that the number of unfeasible points at the interface is significantly reduced, so that Algorithm~\ref{Mechelli_mini_13_Alg1} converges more easily. This confirms the previous remark on the importance of the feasibility of the iterations. Note that, also in the second test, there always exists a combination of $p$ and $L$ for which Algorithm~\ref{Mechelli_mini_13_Alg1} is faster than the semismooth Newton method, in particular for a small $\varepsilon$. Therefore, the WRM is a valid preconditioner in order to solve \eqref{Mechelli_mini_13_Newton_system}, although some issues have to be still clarified. These will be the focus of a future work.

	\vspace{-2em}
	\bibliographystyle{plain}
	\bibliography{Mechelli_mini_13}

\begin{thebibliography}{10}

\bibitem{Mechelli_mini_13_Benamou1996}
J.-D. Benamou.
\newblock A domain decomposition method with coupled transmission conditions
  for the optimal control of systems governed by elliptic partial differential
  equations.
\newblock {\em SIAM J. Numer. Anal.}, 33(6):2401--2416, 1996.

\bibitem{Mechelli_mini_13_Dolean2016}
V.~Dolean, M.~J. Gander, W.~Kheriji, F.~Kwok, and R.~Masson.
\newblock Nonlinear preconditioning: How to use a nonlinear {S}chwarz method to
  precondition {N}ewton's method.
\newblock {\em SIAM J. Sci. Comput.}, 38(6):A3357--A3380, 2016.

\bibitem{Mechelli_mini_13_GanderHalpern1}
M.~J. Gander and L.~Halpern.
\newblock Optimized {S}chwarz waveform relaxation methods for advection
  reaction diffusion problems.
\newblock {\em SIAM J. Numer. Anal.}, 45(2):666--697, 2007.

\bibitem{Mechelli_mini_13_Heink2006}
M.~Heinkenschloss and H.~Nguyen.
\newblock {N}eumann--{N}eumann domain decomposition preconditioners for
  linear-quadratic elliptic optimal control problems.
\newblock {\em SIAM J. Sci. Comput.}, 28(3):1001--1028, 2006.

\bibitem{Mechelli_mini_13_Hintermueller2002}
M.~Hinterm\"uller, K.~Ito, and K.~Kunisch.
\newblock The primal-dual active set strategy as a semismooth {N}ewton method.
\newblock {\em SIAM J. Optim.}, 13(3):865--888, 2002.

\bibitem{Mechelli_mini_13_Krumbiegel2009}
K.~Krumbiegel and A.~R{\"o}sch.
\newblock A virtual control concept for state constrained optimal control
  problems.
\newblock {\em Comput. Optim. Appl.}, 43:213–233, 2009.

\bibitem{Mechelli_mini_13_LionsMagenes1}
J.~L. Lions and E.~Magenes.
\newblock {\em Non-homogeneous boundary value problems and applications (Vol
  II)}.
\newblock Die Grundlehren der mathematischen Wissenschaften. Springer-Verlag
  Berlin Heidelberg, 1972.

\bibitem{Mechelli_mini_13_Mechelli2019}
L.~Mechelli.
\newblock {\em {POD}-based state-constrained economic Model Predictive Control
  of convection-diffusion phenomena}.
\newblock PhD thesis, University of Konstanz, 2019.

\bibitem{Mechelli_mini_13_Mechelli2018}
L.~Mechelli and S.~Volkwein.
\newblock {POD}-based economic optimal control of heat-convection phenomena.
\newblock In M.~Falcone, R.~Ferretti, L.~Gr{\"u}ne, and W.~M. McEneaney,
  editors, {\em Numerical Methods for Optimal Control Problems}, pages 63--87,
  Cham, 2018. Springer International Publishing.

\bibitem{Mechelli_mini_13_Troeltzsch2010}
F.~Tr\"oltzsch.
\newblock {\em Optimal Control of Partial Differential Equations: Theory,
  Methods and Applications}.
\newblock American Mathematical Society, 2010.

\end{thebibliography}
\end{document}